\newtheorem{theorem}{Theorem}
\newtheorem{lemma}[theorem]{Lemma}
\theoremstyle{definition} 
\newtheorem{remark}[theorem]{Remark} 
\newtheorem{notation}[theorem]{Notation} 
\newtheorem{definition}[theorem]{Definition} 
\newtheorem*{remark*}{Remark}
\newtheorem*{notation*}{Notation}
\newtheorem*{acknowledgments*}{Acknowledgments} 
\newcommand{\cA}{\mathcal{A}}
\newcommand{\cB}{\mathcal{B}}
\newcommand{\C}{\mathbb{C}}
\newcommand{\N}{\mathbb{N}} 
\newcommand{\cF}{\mathcal{F}}
\newcommand{\cO}{\mathcal{O}}
\newcommand{\fm}{\mathfrak{m}} 
\newcommand{\new}{\text{\rm new}}
\newcommand{\barQ}{\widebar{Q}}
\newcommand{\barU}{\widebar{U}}
\newcommand*\if@single[3]{%
  \setbox0\hbox{${\mathaccent"0362{#1}}^H$}%
  \setbox2\hbox{${\mathaccent"0362{\kern0pt#1}}^H$}%
  \ifdim\ht0=\ht2 #3\else #2\fi
  }
\newcommand*\rel@kern[1]{\kern#1\dimexpr\macc@kerna}
\newcommand*\widebar[1]{\@ifnextchar^{{\wide@bar{#1}{0}}}{\wide@bar{#1}{1}}}
\newcommand*\wide@bar[2]{\if@single{#1}{\wide@bar@{#1}{#2}{1}}{\wide@bar@{#1}{#2}{2}}}
\newcommand*\wide@bar@[3]{%
  \begingroup 
  \def\mathaccent##1##2{ %
    \if#32 \let\macc@nucleus\first@char \fi
   \setbox\z@\hbox{$\macc@style{\macc@nucleus}_{}$} %
    \setbox\tw@\hbox{$\macc@style{\macc@nucleus}{}_{}$}%
    \dimen@\wd\tw@
    \advance\dimen@-\wd\z@
    \divide\dimen@ 3
    \@tempdima\wd\tw@
    \advance\@tempdima-\scriptspace
    \divide\@tempdima 10
    \advance\dimen@-\@tempdima
    \ifdim\dimen@>\z@ \dimen@0pt\fi
    \rel@kern{0.6}\kern-\dimen@
    \if#31
     \overline{\rel@kern{-0.6}\kern\dimen@\macc@nucleus\rel@kern{0.4}\kern\dimen@}%
      \advance\dimen@0.4\dimexpr\macc@kerna
      \let\final@kern#2%
      \ifdim\dimen@<\z@ \let\final@kern1\fi
      \if\final@kern1 \kern-\dimen@\fi
    \else
      \overline{\rel@kern{-0.6}\kern\dimen@#1}%
    \fi
  }%
  \macc@depth\@ne
  \let\math@bgroup\@empty \let\math@egroup\macc@set@skewchar
  \mathsurround\z@ \frozen@everymath{\mathgroup\macc@group\relax}%
  \macc@set@skewchar\relax
  \let\mathaccentV\macc@nested@a
  \if#31
    \macc@nested@a\relax111{#1}%
  \else
    \def\gobble@till@marker##1\endmarker{}%
    \futurelet\first@char\gobble@till@marker#1\endmarker
    \ifcat\noexpand\first@char A\else
      \def\first@char{}%
    \fi
    \macc@nested@a\relax111{\first@char}%
  \fi
  \endgroup
}
\title{On the Existence of a Global Neighbourhood}
\author{Tom Coates}
\email{t.coates@imperial.ac.uk}
\author{Hiroshi Iritani}
\email{iritani@math.kyoto-u.ac.jp}
\keywords{complex manifold, real analytic manifold, 
neighbourhood, germ, gluing, Hausdorff, complexification, 
unfolding, Frobenius manifold, TEP structure}
\begin{document}
\maketitle 

\begin{abstract}
  Suppose that a complex manifold $M$ is locally embedded into 
  a higher-dimensional neighbourhood as a submanifold.  We show that, if
  the local neighbourhood germs are compatible in a suitable sense,
  then they glue together to give a global neighbourhood of $M$.  As an application, we prove a global version of Hertling--Manin's  unfolding theorem for germs of TEP structures; this has applications in the study of quantum cohomology.
\end{abstract}
\section{Introduction}

We prove:
\begin{theorem}
  \label{thm:main_result}
  Let $M$ be a complex manifold of dimension $m$ and let $\cA$ be a
  sheaf of $\C$-algebras over $M$.  Suppose that there exist a natural
  number $n$ and a morphism of sheaves of $\C$-algebras $\pi \colon
  \cA \to \cO_M$ such that for each $x \in M$ there exists an open
  neighbourhood $U$ of $x$ and an isomorphism $\cA|_U \cong \iota^{-1}
  \cO_{U \times \C^n}$, where $\iota \colon U \hookrightarrow U \times
  \C^n$ is the embedding $x \mapsto (x,0)$, such that the following
  diagram commutes:
  \begin{align}
  \label{eq:local-condition}
  \begin{split} 
    \xymatrix{
      \cA|_U \ar[rr]^{\sim} \ar[rd]_{\pi|_U}&&  \iota^{-1} \cO_{U \times
        \C^n} \ar[dl]^{\iota^\#} \\
      & \cO_U
    }
    \end{split} 
  \end{align}
  Here $\iota^\#$ is the canonical morphism induced by $\iota$.  Then:
  \begin{enumerate}
    \renewcommand{\theenumi}{\roman{enumi}}
  \item There exist a complex manifold $M'$ of dimension $m+n$
    and a closed embedding $\iota \colon M \to M'$ such that we have
    an isomorphism $\cA \cong \iota^{-1} \cO_{M'}$ of sheaves of
    $\C$-algebras which commutes with the surjections to $\cO_M$.
  \item The manifold-germ $M'$ is unique up to unique isomorphism in
    the following sense.  If we have two complex manifolds $M_1'$,
    $M_2'$, two closed embeddings $\iota_1 \colon M \to M'_1$,
    $\iota_2 \colon M\to M'_2$, and an isomorphism of sheaves of
    $\C$-algebras $\phi \colon \iota_1^{-1} \cO_{M_1'} \to
    \iota_2^{-1} \cO_{M_2'}$ commuting with the surjections to
    $\cO_M$, then $\phi$ is induced by a biholomorphic map $\varphi
    \colon N_2 \to N_1$ between open neighbourhoods $N_i$ of $M$ in
    $M_i'$, $i\in\{1,2\}$, such that $\varphi$ is the identity map on
    $M$.  Such a map $\varphi$ is unique as a germ of maps on a
    neighbourhood of $M$.
  \end{enumerate}
\end{theorem}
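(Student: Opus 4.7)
The crux of both parts is a \emph{local} statement that I establish first: if $V_1, V_2$ are open neighborhoods of $M \times \{0\}$ in $M \times \C^n$ and $\phi \colon \iota^{-1}\cO_{V_1} \to \iota^{-1}\cO_{V_2}$ is an isomorphism of sheaves of $\C$-algebras on $M$ commuting with the restrictions to $\cO_M$, then $\phi$ is induced, as a germ along $M$, by a unique biholomorphism between suitable open neighborhoods of $M$ that restricts to the identity on $M$. To prove it, let $z_1, \dots, z_n$ be the standard coordinates on $\C^n$. Since $M$ is paracompact, sections of $\iota^{-1}\cO_{V_2}$ over $M$ are colimits of sections of $\cO$ on open neighborhoods of $M$ in $V_2$, so each $f_i := \phi(z_i)$ is represented by a holomorphic function on some neighborhood $W_2'$ of $M$; compatibility with $\pi$ gives $f_i|_M = 0$. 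Because $\phi$ is a stalkwise isomorphism commuting with the projections, it induces an isomorphism on conormal quotients, which translates to invertibility of the Jacobian $(\partial f_i/\partial z_j)(x,0)$ at every $x \in M$. The holomorphic inverse function theorem then yields a biholomorphism $\varphi \colon W_2 \to W_1$ between open neighborhoods of $M$; the analogous construction using $\phi^{-1}(z_i)$ produces its inverse. Uniqueness follows because any such $\varphi$ must pull $z_i$ back to $f_i$ and preserve the base coordinate.

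For Part (i), cover $M$ by open sets $\{U_\alpha\}$ carrying the local isomorphisms $\phi_\alpha \colon \cA|_{U_\alpha} \xrightarrow{\sim} \iota^{-1}\cO_{U_\alpha \times \C^n}$ given by hypothesis. On each overlap $U_\alpha \cap U_\beta$, the composition $\phi_\beta \circ \phi_\alpha^{-1}$ falls under the local statement and produces a germ of biholomorphism $\varphi_{\beta\alpha}$ between open neighborhoods of $U_\alpha \cap U_\beta$ in $U_\alpha \times \C^n$ and $U_\beta \times \C^n$, identity on $U_\alpha \cap U_\beta$. Uniqueness forces the cocycle $\varphi_{\gamma\beta} \circ \varphi_{\beta\alpha} = \varphi_{\gamma\alpha}$ at the germ level. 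Passing to a locally finite refinement and shrinking the $\C^n$-fibers over each $U_\alpha$, I can arrange that all $\varphi_{\beta\alpha}$ are honestly defined on open sets, that the cocycle holds strictly, and that the resulting glued topological space is Hausdorff; the Hausdorff verification is a standard shrinking argument exploiting that every $\varphi_{\beta\alpha}$ fixes $M$ pointwise and that $M$ itself is Hausdorff. Gluing then produces the manifold $M'$, and the local $\phi_\alpha$ patch to the required isomorphism $\cA \cong \iota^{-1}\cO_{M'}$.

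For Part (ii), around each $x \in M$ both $M_1'$ and $M_2'$ admit product-chart trivializations of the form $U \times \C^n$ compatible with $\iota_i$, either directly from the hypothesis applied to $\cA \cong \iota_i^{-1}\cO_{M_i'}$ or from the local normal form for closed embeddings of complex manifolds. The local statement then yields a germ of biholomorphism at $x$ inducing $\phi$, and uniqueness ensures these germs agree on overlaps and assemble into the desired global germ $\varphi$.

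The main obstacle is the local statement—extracting an honest holomorphic map from a purely sheaf-theoretic isomorphism. After that, the gluing in Part (i) is technical but routine, with the neighborhood-shrinking needed to secure Hausdorffness being the only delicate point.
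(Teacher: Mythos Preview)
Your approach coincides with the paper's: a local lemma converting sheaf isomorphisms into germ biholomorphisms (the paper's ``basic gluing'' Lemma), then gluing with shrinking to secure Hausdorffness for Part~(i), and the same localization for Part~(ii).

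Two points merit comment. First, a minor imprecision in your local statement: the induced map $\varphi$ need not be fibered over $M$, so you must also track the images $\phi(t_j)$ of local base coordinates, not only the $\phi(z_i)$; your remark that $\varphi$ must ``preserve the base coordinate'' is not correct as stated. Second, and more substantively, the Hausdorff verification you call a ``standard shrinking argument'' is actually the main technical content of the paper, occupying three separate lemmas. The crucial condition is not merely to shrink fibers but to arrange, using nested coverings $U_i \Subset V_i \Subset W_i$, that each overlap $Q_{ij} := Q_i \cap O_{ij} \cap \varphi_{ij}^{-1}(Q_j)$ is \emph{relatively compact} in the domain $O_{ij}$ on which $\varphi_{ij}$ is defined; this is what forces the graph of $\varphi_{ij}$ to be closed in $Q_i \times Q_j$, and its proof is a nontrivial compactness--contradiction argument. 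A further, separate shrinking is then needed to make the cocycle condition hold exactly on $Q_{ij} \cap Q_{ik}$ rather than only as germs. None of this is conceptually deep, but dismissing it as routine undersells the work, and a complete proof along your lines would have to supply it.
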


We also discuss the extension of sheaves, proving:

\begin{theorem}
  \label{thm:with_modules} 
  Suppose that $\iota\colon M\hookrightarrow M'$ is a closed embedding
  of complex manifolds.  Let $\cA$ be the sheaf of $\C$-algebras
  $\cA = \iota^{-1} \cO_{M'}$ over $M$ and let $\cB$ be a coherent $\cA$-module. 
  Then:
  \begin{enumerate}
    \renewcommand{\theenumi}{\roman{enumi}}
  \item There exist an open neighbourhood $N$ of $M$ in $M'$ and a
    coherent $\cO_{N}$-module $\cB'$ on $N$ such that $\iota^{-1} \cB'
    \cong \cB$ as sheaves of $\cA$-modules. 
  \item The sheaf-germ $\cB'$ is unique up to unique isomorphism in the
    following sense.  If we have two coherent $\cO_N$-modules $\cB_1'$
    and $\cB_2'$ on a neighbourhood $N$ of $M$ in $M'$ and
    isomorphisms $\iota^{-1} \cB_1' \cong \cB \cong \iota^{-1} \cB_2'$
    of $\cA$-modules, with $\phi \colon \iota^{-1} \cB_1' \to
    \iota^{-1} \cB_2'$ denoting the composite isomorphism, then $\phi$
    is induced by an isomorphism $\Phi \colon \cB_1'|_P \to
    \cB_2'|_P$ of $\cO_P$-modules on an open neighbourhood $P$ of $M$
    in $N$.  Such a morphism $\Phi$ is unique as a germ of
    homomorphisms over a neighbourhood of $M$.
  \end{enumerate}
\end{theorem}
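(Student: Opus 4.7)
My plan is to prove (ii) first and then deduce (i) from it by gluing. For the uniqueness (ii), I would use that $\cB_1'$ and $\cB_2'$ are coherent $\cO_N$-modules, so $\mathcal{H}om_{\cO_N}(\cB_1',\cB_2')$ is coherent and a stalkwise argument (using finite presentations) will yield a canonical identification
\[
\iota^{-1}\mathcal{H}om_{\cO_N}(\cB_1',\cB_2')\;\cong\;\mathcal{H}om_{\cA}(\iota^{-1}\cB_1',\iota^{-1}\cB_2').
\]
Thus $\phi$ corresponds to a global section over $M$ of the left-hand side. Since $M$ is closed in $N$ with paracompact neighborhoods, any global section of $\iota^{-1}\mathcal{F}$ on $M$ is represented by a section of $\mathcal{F}$ on some open neighborhood of $M$, yielding a lift $\Phi\colon \cB_1'|_P \to \cB_2'|_P$ over some open $P \subset N$ containing $M$. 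If $\phi$ is an isomorphism, then $\Phi$ induces isomorphisms at all stalks over $M$, and after shrinking $P$ one may assume $\Phi$ itself is an isomorphism (the locus where a morphism of coherent sheaves is a stalkwise isomorphism is open). Germ-uniqueness is then immediate: two lifts $\Phi_1,\Phi_2$ differ by a section of $\mathcal{H}om$ whose germ vanishes at every point of $M$, so the difference is zero on a neighborhood of $M$.

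For local existence in (i), at each $x \in M$ I would shrink to a neighborhood $U$ on which $\cB|_U$ admits a presentation $\cA|_U^p \to \cA|_U^q \to \cB|_U \to 0$. The defining matrix has finitely many entries in $\cA(U) = \Gamma(U,\iota^{-1}\cO_{M'})$, each represented by a holomorphic function on some neighborhood of $U$ in $M'$; intersecting these finitely many neighborhoods gives a single open $V \subset M'$ on which the matrix is defined. The cokernel of the induced map $\cO_V^p \to \cO_V^q$ is then a coherent $\cO_V$-module $\cB_V'$ whose pullback under $\iota$ recovers $\cB$ over $V \cap M$.

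For the global gluing in (i), I would take a locally finite open cover $\{U_\alpha\}$ of $M$ together with local extensions $\cB_\alpha'$ on $V_\alpha \subset M'$. The canonical identifications $\iota^{-1}\cB_\alpha' \cong \cB \cong \iota^{-1}\cB_\beta'$ on $U_\alpha \cap U_\beta$ supply $\cA$-module isomorphisms $\phi_{\alpha\beta}$, which by (ii) lift to $\cO$-module isomorphisms $\Phi_{\alpha\beta}$ on open sets $W_{\alpha\beta} \subset V_\alpha \cap V_\beta$ containing $U_\alpha \cap U_\beta$. Germ-uniqueness forces the cocycle identities $\Phi_{\alpha\gamma} = \Phi_{\alpha\beta} \circ \Phi_{\beta\gamma}$ on a smaller neighborhood of each triple overlap. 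The main obstacle is then purely organizational: to arrange that these germ-level equalities become honest identities on one common open neighborhood of $M$ in $M'$ along which the sheaves can actually be glued. I expect this to be handled by a paracompactness and shrinking argument of the same flavor as the spatial gluing driving Theorem~\ref{thm:main_result}. The result of the gluing will be the desired coherent $\cO_N$-module $\cB'$.
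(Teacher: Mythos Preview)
Your approach is correct and takes a more conceptual route than the paper. The paper proves (i) first: after building local coherent extensions $\cB_i'$ from finite presentations (as you do), it constructs the transition isomorphisms $\phi_{ij}$ directly by hand, lifting generators through the presentation of $\cB_j'$ --- this is where the paper invokes Stein open sets and Cartan's Theorem~B to guarantee surjectivity on sections. You instead prove (ii) first via the identification $\iota^{-1}\mathcal{H}om_{\cO_N}(\cB_1',\cB_2') \cong \mathcal{H}om_{\cA}(\iota^{-1}\cB_1',\iota^{-1}\cB_2')$ (valid stalkwise by finite presentation) together with the tautness fact that a global section of $\iota^{-1}\cF$ over a closed $M$ in a paracompact $N$ extends to a section on a neighbourhood; then you recycle (ii) to produce the transition data for (i), bypassing the Stein argument entirely. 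Both routes, however, converge on the same residual step you correctly flag as the main obstacle: shrinking the neighbourhoods so that the germ-level cocycle identities hold on honest open sets. The paper handles this with an explicit tubular-neighbourhood/$\epsilon$-tube argument, which is exactly the shrinking you anticipate. What your route buys is economy and a clean separation of the homological content from the point-set topology; what the paper's route buys is self-containedness, since it never appeals to the tautness result (which, while standard, is itself a paracompactness argument of comparable weight).
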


\begin{remark} 
By Oka's coherence theorem, $\cO_{M'}$ is coherent 
and hence $\cA = \iota^{-1} \cO_{M'}$ is also coherent as a sheaf of algebras. 
Therefore being a coherent $\cA$-module is equivalent to being 
\emph{locally finitely presented} as an $\cA$-module, 
i.e.~for each $x\in M$, there exists an open neighbourhood 
$U$ of $x$ in $M$ and an exact sequence of $\cA$-modules:
  \begin{equation} 
    \label{eq:presentation}
    \xymatrix{
      \cA_U^{\oplus k} \ar[r] &  
      \cA_U^{\oplus l} \ar[r] &
      \cB|_U \ar[r] &  0
      }
  \end{equation} 
  for some $k, l\in \N$. 
See e.g.~\cite[Appendix]{Kashiwara}. 
\end{remark}

\begin{remark} 
  If in addition $\cB$ is locally free as an $\cA$-module 
in Theorem \ref{thm:with_modules}, then 
$\cB'$ becomes locally free as an $\cO_N$-module in a
  neighbourhood $N$ of $M$, because the stalk $\cB'_x$ at each $x\in M$
  is a free $\cO_{M',x}$-module.
\end{remark} 

\begin{remark}
\label{rem:realanalytic} 
We have stated Theorems~\ref{thm:main_result}
and~\ref{thm:with_modules} in the category of holomorphic manifolds,
but the same statements hold true, with the same proofs, in the real
analytic category.  
\end{remark} 

We can reformulate our results as an \emph{equivalence of categories}.
Namely, for Theorem \ref{thm:main_result}, the category of sheaves
$\cA$ of $\C$-algebras on $M$ equipped with surjections $\pi \colon
\cA \to \cO_M$ satisfying the local condition
\eqref{eq:local-condition} is equivalent to the category of germs of
neighbourhoods $\iota \colon M \hookrightarrow M'$ of $M$.  For
Theorem \ref{thm:with_modules}, the category of coherent 
$\cA$-modules is equivalent to the category 
of germs of coherent sheaves on a neighbourhood of $M$ in $M'$.  It is
not difficult to modify the discussion below to prove these
categorical equivalences.

In the real analytic category (Remark \ref{rem:realanalytic}), 
Theorem \ref{thm:main_result} may be viewed as a generalization of 
the existence theorem for the complexification of a real-analytic manifold, 
see e.g.~\cite{Whitney--Bruhat}.  
In the $C^\infty$ category, Lemma
\ref{lem:basicgluing} below is not valid (see e.g.~\cite{Shiota}) and our results
do not hold.  However, most of the arguments for Theorem
\ref{thm:main_result} work if we can take representatives of
neighbourhood germs and $C^\infty$ gluing maps between them which
satisfy cocycle conditions as germs; similar arguments appear in the
context of Kuranishi structures \cite{FOOO,Joyce}.
Our original motivation was to globalize the unfolding of
Frobenius-type structures (or meromorphic connections, or TEP
structures) which has been studied by Hertling--Manin\cite{Hertling--Manin} and Reichelt~\cite{Reichelt} on the level of
germs.  We give a global version of Hertling--Manin's unfolding theorem for TEP structures in section~\ref{sec:unfolding} below.

\begin{notation}
  We require that manifolds be paracompact.
\end{notation}

\begin{notation}
  We write $A \Subset B$ if and only if $A$ is a relatively compact
  subset of $B$.
\end{notation}

\begin{acknowledgments*} 
  T.C.~thanks Eugenia Cheng for useful correspondence.  H.I.~thanks Ono
  Kaoru and Ken-ichi Yoshikawa for useful discussions.
\end{acknowledgments*}

\section{The Proof of Theorem~\ref{thm:main_result}}
Our assumptions on $\cA$ imply that we can find a locally-finite open
covering $\{W_i: i\in I\}$ of $M$ with index set $I$ such that
$\cA|_{W_i} \cong \iota^{-1} \cO_{W_i \times Z_i}$, where $Z_i$ is a
copy of $\C^n$ and $\iota \colon W_i \to W_i \times Z_i $ is the
embedding $x \mapsto (x,0)$.  Without loss of generality we can assume
both that $W_i$ is a co-ordinate neighbourhood on $M$ (i.e.~is
identified with an open subset of $\C^m$) and that $W_i$ is relatively
compact in $M$.  We take locally finite coverings $\{U_i: i\in I\}$,
$\{V_i : i\in I\}$ with the same index set $I$ such that $U_i \Subset
V_i \Subset W_i$.  We write $W_{ij} := W_i \cap W_j$, $V_{ij} := V_i
\cap V_j$, $V_{ijk} := V_i \cap V_j \cap V_k$.  The basic fact we use
for the gluing is the following Lemma.

\begin{lemma}
\label{lem:basicgluing} 
Let $U \subset \C^m$ be an open set and let $\iota \colon U \to U
\times \C^n$ be the embedding $x\mapsto (x,0)$.  Let $\phi \colon
\iota^{-1}\cO_{U \times \C^n} \to \iota^{-1} \cO_{U\times \C^n}$ be a
homomorphism of sheaves of $\C$-algebras which commutes with the
natural surjections to $\cO_U$.  Then:
\begin{itemize}
\item[(a)] there exists an open neighbourhood $U'$ of $U\times \{0\}$
  in $U\times \C^n$ and a holomorphic map $\varphi \colon U' \to
  U\times \C^n$ which is the identity on $U\times \{0\}$ such that
  $\phi$ coincides with the pull-back by $\varphi$; 
\item[(b)] if $\phi$ is an isomorphism then the map $\varphi$ is a
  biholomorphic isomorphism onto its image.
\end{itemize}
\end{lemma}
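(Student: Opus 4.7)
The plan is to define $\varphi$ by pulling back the coordinates along $\phi$, then verify the desired properties stalk by stalk. Let $(x_1,\ldots,x_m,y_1,\ldots,y_n)$ be the standard coordinates on $U\times\C^n$. Each is a global section of $\cO_{U\times\C^n}$ and hence, via restriction of germs to $U\times\{0\}$, a global section of $\iota^{-1}\cO_{U\times\C^n}$ over $U$. Applying $\phi$ produces sections $F_i := \phi(x_i)$ and $G_j := \phi(y_j)$ of $\iota^{-1}\cO_{U\times\C^n}$ over $U$.

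The first and main step is to lift each of $F_i, G_j$ to an actual holomorphic function on a common open neighbourhood $U'$ of $U\times\{0\}$ in $U\times\C^n$. Since $U\times\{0\}$ is closed in $U\times\C^n$, the identification $\Gamma(U, \iota^{-1}\cO_{U\times\C^n}) = \varinjlim_V \cO_{U\times\C^n}(V)$ (over open neighbourhoods $V$ of $U\times\{0\}$) provides such representatives. This identification can be established by hand by covering $U\times\{0\}$ with polydiscs on which local representatives exist, and using the identity principle on connected polydisc intersections to see that two local representatives agreeing on an open subset of the submanifold agree on the full intersection, so they glue into a single holomorphic function on a single open neighbourhood. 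Taking the intersection of the neighbourhoods obtained for the $F_i$ and $G_j$ (finitely many) gives the common $U'$. I expect this global lifting to be the main technical point.

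Define $\varphi \colon U' \to U\times\C^n$ by $\varphi := (F_1,\ldots,F_m,G_1,\ldots,G_n)$. The commutativity of $\phi$ with the surjections to $\cO_U$ forces $F_i|_{U\times\{0\}} = x_i$ and $G_j|_{U\times\{0\}} = 0$, so $\varphi|_{U\times\{0\}} = \mathrm{id}$; continuity then lets us shrink $U'$ so that $\varphi(U') \subset U\times\C^n$. To check $\varphi^* = \phi$ as morphisms of sheaves, proceed stalk by stalk: at each $x\in U$ both $\varphi^*_x$ and $\phi_x$ are local $\C$-algebra endomorphisms of $\cO_{U\times\C^n,(x,0)}$ (locality of $\phi_x$ follows from the compatibility with $\iota^\#$, since a germ lies in the maximal ideal iff it vanishes at $(x,0)$) and they agree on the coordinate generators $x_i, y_j$ by construction; by the classical correspondence between local $\C$-algebra maps of analytic local rings and germs of holomorphic maps of analytic germs, the two stalk maps coincide.

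For part (b), apply (a) to $\phi^{-1}$ to produce a holomorphic $\psi\colon U''\to U\times\C^n$, identity on $U\times\{0\}$, with $\psi^* = \phi^{-1}$. On a suitable neighbourhood of $U\times\{0\}$ the composition $\psi\circ\varphi$ is defined, and $(\psi\circ\varphi)^* = \varphi^*\circ\psi^* = \phi\circ\phi^{-1} = \mathrm{id}$. By the same stalk-wise uniqueness used above, $\psi\circ\varphi$ equals the identity as a germ at every point of $U\times\{0\}$, hence on an open neighbourhood of $U\times\{0\}$. The symmetric argument gives $\varphi\circ\psi = \mathrm{id}$ on a neighbourhood, so after a final shrinking of $U'$, the map $\varphi$ is a biholomorphism onto its image with inverse $\psi$.
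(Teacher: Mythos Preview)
Your argument for (a) is essentially the paper's: define $\varphi$ from $\phi$ applied to the coordinate functions, then verify $\varphi^\star = \phi$ stalk by stalk. The paper organizes the stalkwise step a bit differently, first treating the case $m=0$, $U=\{\text{pt}\}$ directly (where the stalk is $\C\{z_1,\dots,z_n\}$ and one argues via $\fm$-adic continuity that a local $\C$-algebra endomorphism is determined by the images of the generators), and then reducing each stalk of the general case to this; you instead invoke the classical correspondence between local $\C$-algebra maps of analytic local rings and germs of holomorphic maps as a black box, which amounts to the same thing. Conversely, you are more explicit than the paper about why a global section of $\iota^{-1}\cO_{U\times\C^n}$ over $U$ is represented by an honest holomorphic function on a neighbourhood of $U\times\{0\}$; the paper takes this as understood.

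For (b) your route genuinely differs. The paper simply says that (a) implies (b) by the inverse function theorem: an isomorphism on stalks induces an isomorphism on $\fm/\fm^2$, hence the differential of $\varphi$ is invertible along $U\times\{0\}$. You instead apply (a) to $\phi^{-1}$ to obtain $\psi$ and show $\psi\circ\varphi=\mathrm{id}$, $\varphi\circ\psi=\mathrm{id}$ on a neighbourhood via stalkwise uniqueness. Your argument is a little longer but arguably cleaner: it yields global injectivity of $\varphi$ on a neighbourhood of $U\times\{0\}$ directly from $\psi\circ\varphi=\mathrm{id}$, whereas the inverse function theorem by itself only gives local biholomorphicity and one still needs a (routine) shrinking argument, using $\varphi|_{U\times\{0\}}=\mathrm{id}$, to get an injective restriction.
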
 
\begin{proof}[Proof of Lemma \ref{lem:basicgluing}] 
  Statement (a) implies statement (b), by the inverse function theorem,
  so we prove (a).  Consider first the case where $m=0$ and $U$ is a
  point. Then $\phi$ is a \mbox{$\C$-algebra} endomorphism of the ring
  $\iota^{-1} \cO_{\C^n} = \C\{z_1,\dots,z_n\}$ of convergent power
  series which preserves the maximal ideal $\fm = (z_1,\dots,z_n)$.
  Because such a $\phi$ is continuous with respect to the $\fm$-adic
  topology, $\phi$ is determined by the images of the generators
  $z_1,\dots,z_n$.  The images determine a holomorphic map $\varphi
  \colon (z_1,\dots,z_n) \mapsto (\phi(z_1),\dots, \phi(z_n))$ which
  is defined on a neighbourhood $U'$ of $0$ in $\C^n$, and $\phi$
  coincides with the pull-back by $\varphi$.

  Consider now the general case.  Let $t_1,\dots,t_m$ denote the
  standard co-ordinates on $U \subset \C^m$ and let $z_1,\dots,z_n$
  denote the standard co-ordinates on $\C^n$. Then the images of
  $t_1,\dots,t_m$ and $z_1,\dots,z_n$ under $\phi$ give global
  sections of $\iota^{-1}\cO_{U\times \C^m}$, and thus they define a
  holomorphic map
  \[
  \varphi \colon (t_1, \dots, t_m, z_1,\dots,z_n) \mapsto 
  (\phi(t_1),\dots,\phi(t_m),\phi(z_1),\dots,\phi(z_n))
  \]
  on a neighbourhood $U'$ of $U\times \{0\}$ in $U\times \C^n$.  Since
  $\phi$ commutes with the surjections to $\cO_U$, $\varphi$ restricts
  to the identity map on $U\times \{0\}$.  The pull-back
  $\varphi^\star \colon \iota^{-1}\cO_{U\times \C^n} \to \iota^{-1}
  \cO_{U\times \C^n}$ defines a homomorphism of sheaves of
  \mbox{$\C$-algebras} commuting with the surjections to $\cO_U$.  The
  $m=0$, $U = \text{point}$ case implies that $(\varphi^\star)_t =
  \phi_t$ on the stalk at every point $t\in U$.  Thus $\varphi^\star=
  \phi$.
\end{proof} 

The composite isomorphism $\iota^{-1} \cO_{W_{ij}\times Z_i} \cong
\cA|_{W_{ij}} \cong \iota^{-1} \cO_{W_{ij}\times Z_j}$ induces a
biholomorphic isomorphism $\varphi_{ij} \colon N_{ij} \to N_{ji}$ for
each $i,j\in I$, where $N_{ij}$ is an open neighbourhood of
$W_{ij}\times \{0\}$ in $W_{ij} \times Z_i$ and $\varphi_{ij}$ is the
identity on $W_{ij} \times \{0\}$.  Note that $N_{ij}$ and $N_{ji}$
are subsets of different spaces.  Without loss of generality we may
assume that $N_{ii} = W_i \times Z_i$ and that $\varphi_{ii}$ is the
identity map.  Define:
\begin{align*}
  & O_i := V_i \times Z_i \\
  & O_{ij} := (V_{ij} \times Z_i) \cap N_{ij} \cap \varphi_{ij}^{-1}
  (V_{ij} \times Z_j)
\end{align*}
Then $O_{ij}$ is an open subset of $O_i$ which contains $V_{ij} \times
\{0\}$.  By restricting $\varphi_{ij}$ to $O_{ij}$, we obtain a
biholomorphic isomorphism $\varphi_{ij} \colon O_{ij} \to O_{ji}$ such
that $\varphi_{ij}|_{V_{ij} \times \{0\}}$ is the identity map.

\begin{lemma}
  \label{lem:Q_i}
  There exist open subsets $Q_i$ of $O_i$, $i \in I$, such that for
  each $i$,~$j \in I$ we have:
  \begin{itemize}
  \item[(a)] $Q_i \Subset O_i$;
  \item[(b)] $U_i \times \{0\} \subset Q_i \subset U_i \times Z_i$;
  \item[(c)] $Q_{ij} \Subset O_{ij}$, where $Q_{ij} := Q_i \cap O_{ij} \cap
    \varphi_{ij}^{-1}(Q_j)$.
  \end{itemize}
\end{lemma}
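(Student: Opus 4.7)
The plan is to take each $Q_i$ to be a tubular neighbourhood of $U_i \times \{0\}$ of constant radius: setting
\[
Q_i := \{(x, z) \in U_i \times Z_i : \|z\| < \epsilon_i\}
\]
with respect to the standard Hermitian norm on $Z_i = \C^n$, I would choose the positive constants $\epsilon_i$ jointly small enough to force (c). Conditions (a) and (b) then hold automatically: (b) is built in, while (a) follows from $\overline{Q_i} \subset \overline{U_i} \times \overline{B(0, \epsilon_i)}$, which is a compact subset of $V_i \times Z_i = O_i$.

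For (c), two preliminary observations reduce the problem to finitely many conditions per index. First, since $\{W_k\}$ is locally finite and $\overline{V_i}$ is compact, the set $J(i) := \{j \in I : V_{ij} \neq \emptyset\}$ is finite. Second, for each such pair $(i,j)$ one has $\overline{U_i \cap U_j} \subset \overline{U_i} \cap \overline{U_j} \subset V_{ij}$, so that $\overline{U_i \cap U_j} \times \{0\}$ is a compact subset of the open set $O_{ij} \subset V_{ij} \times Z_i$; by a standard tube-lemma argument there exists $\delta_{ij} > 0$ with
\[
\overline{U_i \cap U_j} \times \overline{B(0, \delta_{ij})} \subset O_{ij}.
\]

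I would then take $\epsilon_i := \min\bigl(\{1\} \cup \{\delta_{ij} : j \in J(i)\}\bigr) > 0$, which is positive by the finiteness of $J(i)$. To verify (c), note that $\overline{Q_{ij}} \subset \overline{Q_i}$ is automatically compact; any accumulation point $(x,z)$ of $Q_{ij}$ satisfies $x \in \overline{U_i \cap U_j}$ and $\|z\| \le \epsilon_i \le \delta_{ij}$, and hence lies in $\overline{U_i \cap U_j} \times \overline{B(0, \delta_{ij})} \subset O_{ij}$. The only slightly delicate step is the local-finiteness bookkeeping behind the finiteness of $J(i)$; beyond that, (c) is genuinely pairwise and decouples into finitely many compactness assertions per index, so no simultaneous-shrinking or cocycle argument is needed.
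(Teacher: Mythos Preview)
Your ansatz---tubes $Q_i = U_i \times B(0,\epsilon_i)$ of constant radius---is exactly what the paper uses, and (a),~(b) are indeed automatic.  The gap is in your verification of (c): the assertion that every accumulation point $(x,z)$ of $Q_{ij}$ has $x \in \overline{U_i \cap U_j}$ is not justified.  Membership in $Q_i$ gives $x \in \overline{U_i}$, but membership in $\varphi_{ij}^{-1}(Q_j)$ only tells you that the \emph{first coordinate of $\varphi_{ij}(x,z)$} lies in $U_j$; since $\varphi_{ij}$ is required to be the identity only along the zero section, that first coordinate need not equal $x$ when $z \neq 0$.  Thus points of $Q_{ij}$, and hence their limits, can have base coordinate well outside $\overline{U_i \cap U_j}$, a region where your $\delta_{ij}$ gives no information about $O_{ij}$.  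For instance, if $N_{ij}$ happens to be thin over some $x_0 \in U_i \cap V_j$ with $x_0 \notin \overline{U_j}$, then $\overline{Q_{ij}}$ can meet $\partial N_{ij}$ above $x_0$ and escape $O_{ij}$; your bound $\epsilon_i \le \delta_{ij}$ does nothing to prevent this.

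The paper closes this gap by a contradiction argument that lets the tube radius tend to zero.  Fixing a relatively compact $P \subset O_{ij}$ containing $(\overline{U_i} \cap \overline{U_j}) \times \{0\}$ and assuming $Q_i(n) \cap O_{ij} \cap \varphi_{ij}^{-1}(Q_j(n)) \not\subset P$ for every $n$, one extracts a convergent sequence whose limit is forced onto the zero section (because $1/n \to 0$).  The limit then lies in $\overline{V_{ij}} \times \{0\} \subset W_{ij} \times \{0\} \subset N_{ij}$, so $\varphi_{ij}$ is continuous there and equals the identity; this finally pins the base coordinate of the limit into $\overline{U_j}$, hence into $(\overline{U_i} \cap \overline{U_j}) \times \{0\} \subset P$, a contradiction.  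The passage to the zero section is exactly the ingredient your fixed-$\epsilon_i$ argument is missing.
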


\begin{proof}[Proof of Lemma~\ref{lem:Q_i}]
  Denote by $\barU_i$ the closure of $U_i$ in $V_i$; by assumption
  $\barU_i$ is compact.  We have that $\barU_i \cap \barU_j$ is
  contained in $V_{ij}$, and hence that $(\barU_i \cap \barU_j) \times
  \{0\} \subset O_{ij}$.  Fix $i$,~$j \in I$, and fix a relatively
  compact open subset $P$ of $O_{ij}$ such that $P$ contains $(\barU_i
  \cap \barU_j) \times \{0\}$; such a subset exists because the set
  $\barU_i \cap \barU_j$ is compact.  Set:
  \[
  Q_i(n) := U_i \times \big\{x \in Z_i : |x| < \textstyle \frac{1}{n} \big\}
  \]
  noting that $Q_i(n)$ satisfies conditions (a) and (b) of the Lemma.

  We claim that there exists $n$ such that
  \begin{equation}
    \label{eq:Qin_goal}
    Q_i(n) \cap O_{ij} \cap \varphi_{ij}^{-1} \big(Q_j(n)\big) \subset P
  \end{equation}
  Suppose, on the contrary, that for each $n$ there exists an element
  $x_n \in Q_i(n) \cap O_{ij} \cap \varphi_{ij}^{-1} \big(Q_j(n)\big)$
  such that $x_n \not \in P$.  After passing to a subsequence, we have
  that $(x_n)$ converges to a limit $x \in \barU_i \times \{0\}$.
  Thus $(x_n)$ converges in $O_i$.  On the other hand each $x_n$ lies
  in the closed subset $O_i \setminus P$ of $O_i$, and so $x \in O_i
  \setminus P$.  Thus $x$ lies in $(\barU_i \setminus \barU_j) \times
  \{0\}$.  Now $x_n$ lies in $O_{ij}$ for each $n$, hence $x_n$ lies
  in $V_{ij} \times Z_i$ and the limit $x$ lies in the closure of
  $V_{ij}$ in $M$.  However the closure of $V_{ij}$ is contained in
  $W_{ij}$.  Recall that $\varphi_{ij}$ is defined and continuous on
  the open neighbourhood $N_{ij}$ of $W_{ij} \times \{0\}$ in $W_i'$.
  Thus:
  \[
  \varphi_{ij}(x) = \lim_{n \to \infty} \varphi_{ij}(x_n)
  \]
  The right hand side here converges to an element in $\barU_j \times
  \{0\}$, since $\varphi_{ij}(x_n) \in Q_j(n)$.  This is a
  contradiction: we have shown that $x \in \barU_i \setminus \barU_j$,
  and $\varphi_{ij}|_{W_{ij} \times \{0\}}$ is the identity map.
  Thus, for each $i$ and $j \in I$, there exists an integer $n = n(i,j)$ such that
  \eqref{eq:Qin_goal} holds.

  Since $V_i$ is relatively compact in $M$ and since the covering
  $\{V_i : i \in I\}$ is locally finite, only finitely many $V_j$ have
  non-empty intersection with a fixed $V_i$.  Thus we can define:
  \begin{align*}
    & n(i) := \max \{ n(i,j) : \text{$j \in I$ such that $V_i \cap V_j \ne \varnothing$}
    \} \\
    & Q_i := Q_i\big(n(i)\big)
  \end{align*}
  to obtain open sets $\{Q_i : i \in I\}$ with the properties claimed.
\end{proof}

\begin{lemma}
  \label{lem:closed_image}
  Let $\{Q_i : i \in I\}$ be such that $Q_i$ is an open subset of
  $O_i$ and that properties (a--c) in Lemma~\ref{lem:Q_i} hold.  Then
  the image of the map $Q_{ij} \to Q_i \times Q_j$ given by
  $(\text{\rm inclusion}, \varphi_{ij})$ is closed in $Q_i \times
  Q_j$.
\end{lemma}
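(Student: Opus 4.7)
The plan is a direct sequential argument. Since every space under consideration is an open subset of the finite-dimensional complex manifold $M \times \C^n$ and is therefore metrizable, closedness in $Q_i \times Q_j$ is equivalent to sequential closedness. Accordingly, I would take an arbitrary sequence $(x_n) \subset Q_{ij}$ such that $x_n \to x$ in $Q_i$ and $\varphi_{ij}(x_n) \to y$ in $Q_j$, and verify that $(x,y)$ lies in the image, i.e.~that $x \in Q_{ij}$ and $y = \varphi_{ij}(x)$.

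The key input is property (c) from Lemma~\ref{lem:Q_i}, which gives $Q_{ij} \Subset O_{ij}$. The closure $\overline{Q_{ij}}$ is then a compact subset of the Hausdorff space $O_i$, hence closed in $O_i$, and it is contained in $O_{ij}$. Since $(x_n) \subset Q_{ij} \subset \overline{Q_{ij}}$ and $x_n \to x$ in $O_i$, we obtain $x \in \overline{Q_{ij}} \subset O_{ij}$. Now $\varphi_{ij}$ is continuous on $O_{ij}$, so $\varphi_{ij}(x_n) \to \varphi_{ij}(x)$, and uniqueness of limits in the Hausdorff space $O_j$ forces $y = \varphi_{ij}(x)$. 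Combining $x \in Q_i$, $x \in O_{ij}$, and $\varphi_{ij}(x) = y \in Q_j$ yields $x \in Q_i \cap O_{ij} \cap \varphi_{ij}^{-1}(Q_j) = Q_{ij}$, as required.

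The one non-trivial step --- and the reason the relative-compactness condition (c) is built into Lemma~\ref{lem:Q_i} --- is ensuring that the limit point $x$ does not escape the open set $O_{ij}$ on which $\varphi_{ij}$ is defined. Without $Q_{ij} \Subset O_{ij}$, the sequence could converge in $Q_i$ to a boundary point of $O_{ij}$ where $\varphi_{ij}$ is undefined, and the asserted closedness would fail; the earlier construction of the $Q_i$ is designed precisely to rule this out.
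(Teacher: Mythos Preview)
Your proof is correct and follows essentially the same route as the paper: a sequential argument using property~(c) to trap the limit $x$ inside $O_{ij}$, then continuity of $\varphi_{ij}$ to identify $y=\varphi_{ij}(x)$ and conclude $x\in Q_{ij}$. Your version is slightly more explicit about metrizability and Hausdorffness, and your closing paragraph nicely isolates why condition~(c) is the crux, but the mathematical content is the same.
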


\begin{proof}[Proof of Lemma~\ref{lem:closed_image}] Let $(x_n)$ be a
  sequence in $Q_{ij}$ such that $(x_n)$ converges in $Q_i$ and
  $\big(\varphi_{ij}(x_n)\big)$ converges in $Q_j$.  Let $x$ denote
  the limit of $(x_n)$ in $Q_i$.  Since $Q_{ij}$ is relatively compact
  in $O_{ij}$, the limit $x$ lies in $O_{ij}$.  But
  $\big(\varphi_{ij}(x_n)\big)$ converges in $Q_j$ and thus
  $\varphi_{ij}(x) \in Q_j$, or in other words $x \in
  \varphi_{ij}^{-1}(Q_j)$.  Thus $x \in Q_{ij}$.
\end{proof}

\begin{lemma}
  \label{lem:Q_i_extra}
  There exist open subsets $Q_i$ of $O_i$, $i \in I$, such that
  properties (a--c) in Lemma~\ref{lem:Q_i} hold and further, for each
  $i$,~$j$,~$k \in I$, we have:
  \begin{itemize}
  \item[(d)] $Q_{ij} \cap Q_{ik} \subset \varphi_{ij}^{-1}(O_{jk})$;
  \item[(e)] $\varphi_{jk} \circ \varphi_{ij} = \varphi_{ik}$ on
    $Q_{ij} \cap Q_{ik}$.
  \end{itemize}
Note that condition (d) guarantees that the composition in (e) is well-defined.
\end{lemma}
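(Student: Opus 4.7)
The idea is to verify the cocycle (e) first at the level of germs along the zero section, and then to shrink the $Q_i$'s by a compactness argument parallel to the one in Lemma~\ref{lem:Q_i} to force (d) and (e) to hold on the nose.

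Fix $i,j,k\in I$ with $V_{ijk}\neq\varnothing$. The two holomorphic maps $\varphi_{jk}\circ\varphi_{ij}$ and $\varphi_{ik}$ are each defined on some open neighbourhood of $W_{ijk}\times\{0\}$ in $W_i\times Z_i$, and both induce, by pull-back on stalks, the composite sheaf isomorphism $\iota^{-1}\cO_{W_{ijk}\times Z_i}\cong\cA|_{W_{ijk}}\cong\iota^{-1}\cO_{W_{ijk}\times Z_k}$. The proof of Lemma~\ref{lem:basicgluing} shows that the germ of such a $\varphi$ along the zero section is uniquely determined by $\phi$ (its coordinate functions are read off from $\phi$). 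Hence $\varphi_{jk}\circ\varphi_{ij}$ and $\varphi_{ik}$ coincide as germs along $W_{ijk}\times\{0\}$, and since $\varphi_{ij}$ is the identity on $W_{ij}\times\{0\}$ there is an open neighbourhood $R_{ijk}\subset O_i$ of $W_{ijk}\times\{0\}$ on which both compositions are defined, $\varphi_{ij}(R_{ijk})\subset O_{jk}$, and $\varphi_{jk}\circ\varphi_{ij}=\varphi_{ik}$.

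With $Q_i(n):=U_i\times\{z\in Z_i:|z|<\tfrac1n\}$ as in the proof of Lemma~\ref{lem:Q_i}, I claim that for each such triple there exists an integer $N(i,j,k)$ such that for every $n\geq N(i,j,k)$,
\[
Q_i(n)\cap O_{ij}\cap\varphi_{ij}^{-1}(Q_j(n))\cap O_{ik}\cap\varphi_{ik}^{-1}(Q_k(n))\ \subset\ R_{ijk}.
\]
This follows by the same contradiction argument as in Lemma~\ref{lem:Q_i}: a sequence $x_n$ in the left-hand side but outside $R_{ijk}$ has a subsequence converging to some $(t,0)$ with $t\in\barU_i$. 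Since $x_n\in O_{ij}\subset V_{ij}\times Z_i$, we get $t\in\overline{V_{ij}}\subset W_{ij}$, so $(t,0)\in N_{ij}$ and $\varphi_{ij}$ is continuous there; combined with $\varphi_{ij}(x_n)\in Q_j(n)$ and the identity property of $\varphi_{ij}$ on $W_{ij}\times\{0\}$, this forces $t\in\barU_i\cap\barU_j$. The same reasoning with $k$ gives $t\in\barU_k$, so $t\in\barU_i\cap\barU_j\cap\barU_k\subset V_{ijk}$, and therefore $(t,0)\in R_{ijk}$, a contradiction.

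Finally, relative compactness of $V_i$ and local finiteness of $\{V_j\}$ imply that for each $i$ only finitely many triples $(i',j',k')$ containing $i$ satisfy $V_{i'j'k'}\neq\varnothing$. Letting $n_\ast(i)$ be the integer from Lemma~\ref{lem:Q_i}, set
\[
n(i):=\max\Bigl( n_\ast(i),\ \max\{ N(i',j',k') : i\in\{i',j',k'\},\ V_{i'j'k'}\neq\varnothing \} \Bigr)
\]
and take $Q_i:=Q_i(n(i))$. Then conditions (a)--(c) are preserved, and for every relevant triple one has $\min(n(i),n(j),n(k))\geq N(i,j,k)$, whence $Q_{ij}\cap Q_{ik}\subset R_{ijk}$; this immediately yields (d) and (e). The main technical obstacle is the compactness step, and within it the need to place the limit point $(t,0)$ simultaneously in all three closures $\barU_i,\barU_j,\barU_k$---this depends on first trapping $t$ in the appropriate $W$'s so that $\varphi_{ij}$ and $\varphi_{ik}$ are actually continuous at $(t,0)$.
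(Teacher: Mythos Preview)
Your argument is correct, and it takes a somewhat different route from the paper's.  Both proofs begin by establishing the germ identity $\varphi_{jk}\circ\varphi_{ij}=\varphi_{ik}$ along the zero section over the triple intersection, and then must shrink the $Q_i$ so that the actual set $Q_{ij}\cap Q_{ik}$ lands in the region where this identity holds.  The paper does this by a \emph{surgery}: starting from arbitrary $\{Q_i\}$ satisfying (a)--(c), it excises $\barQ_{ijk}$ from $Q_i$ and refills with a good neighbourhood $N_{ijk}$, one triple at a time, via $Q_i^{(a)}:=(Q_i^{(a-1)}\setminus\barQ_{ij_ak_a})\cup(N_{ij_ak_a}\cap Q_i^{(a-1)})$.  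You instead retain the explicit tube form $Q_i(n)=U_i\times\{|z|<1/n\}$ throughout and shrink by increasing $n$, rerunning the compactness contradiction of Lemma~\ref{lem:Q_i} with three indices in place of two.  Your approach is more uniform with Lemma~\ref{lem:Q_i} and avoids the somewhat ad hoc inductive excision; the paper's approach has the mild advantage that it applies to any $\{Q_i\}$ satisfying (a)--(c), not only to product tubes.

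One small slip worth noting: since $O_{jk}\subset V_{jk}\times Z_j$, the requirement $\varphi_{ij}(R_{ijk})\subset O_{jk}$ can hold only on a neighbourhood of $V_{ijk}\times\{0\}$, not of all of $W_{ijk}\times\{0\}$ (and likewise $R_{ijk}\subset O_i=V_i\times Z_i$ already forces this).  This is harmless for your argument, because the compactness step only uses that $R_{ijk}$ contains $(\barU_i\cap\barU_j\cap\barU_k)\times\{0\}\subset V_{ijk}\times\{0\}$.
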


\begin{proof}[Proof of Lemma~\ref{lem:Q_i_extra}]
  Let $\{Q_i : i \in I\}$ be such that, for each $i \in I$, $Q_i$ is
  an open subset of $O_i$ and that properties (a--c) hold.  Such
  subsets exist by Lemma~\ref{lem:Q_i}.  Define:
  \[
  Q_{ijk} := Q_{ij} \cap Q_{ik}
  \]
  Let $\barQ_{ij}$ denote the closure of $Q_{ij}$ in $O_{ij}$. This is
  compact, and hence $\barQ_{ij}$ is at the same time the closure of
  $Q_{ij}$ in $O_i$.  Let $\barQ_{ijk}$ denote the closure of
  $Q_{ijk}$ in $O_i$.  This is contained in the compact set
  $\barQ_{ij} \cap \barQ_{ik}$, hence in particular is contained in
  $O_{ij} \cap O_{ik}$.

We claim that there exists an open neighbourhood 
$N_{ijk}$ of $\barQ_{ijk}
\cap (U_i \times \{0\})$ in $O_{ij} \cap O_{ik}$ such that:
\begin{itemize}
  \item $N_{ijk} \subset \varphi_{ij}^{-1} (O_{jk})$; and
  \item $\varphi_{jk} \circ \varphi_{ij} = \varphi_{ik}$ holds on $N_{ijk}$.
  \end{itemize}
  It suffices to show that each $x$ in $\barQ_{ijk} \cap (U_i \times
  \{0\})$ has an open neighbourhood $N_x$ in $O_{ij} \cap O_{ik}$ such
  that $N_x \subset \varphi_{ij}^{-1} (O_{jk})$ and that $\varphi_{jk}
  \circ \varphi_{ij} = \varphi_{ik}$ holds on $N_x$.  Let $x \in
  \barQ_{ijk} \cap (U_i \times \{0\})$.  Then $x$ lies in $O_{ij} \cap
  O_{ik} \cap (U_i\times \{0\}) = (V_{ijk} \cap U_i) \times \{0\}$ and
  hence lies in $O_{ij} \cap O_{ik} \cap \varphi_{ij}^{-1}(O_{jk})$.
  But $\varphi_{jk} \circ \varphi_{ij} = \varphi_{ik}$ holds as germs
  at $x$, and so choosing $N_x$ to be a sufficiently small open
  neighbourhood of $x$ in $O_{ij} \cap O_{ik} \cap
  \varphi_{ij}^{-1}(O_{jk})$ proves the claim.

  Recall that properties (a--c) in Lemma~\ref{lem:Q_i} hold for $\{Q_i
  : i \in I\}$ and note that, with the exception of the assertion that
  $U_i \times \{0\} \subset Q_i$, these properties are preserved under
  shrinking the sets $Q_i$.  For a fixed $i\in I$, there are only
  finitely many pairs $(j,k)\in I\times I$ such that the triple
  intersection $V_{ijk}$ is nonempty.  Thus, as $Q_{ijk} \subset
  V_{ijk}\times Z_i$, there are only finitely many pairs $(j,k)\in
  I\times I$ such that $Q_{ijk}$ is nonempty.  Let
  $(j_1,k_1),\dots,(j_f,k_f)$ be all such pairs.  We shrink $Q_i$
  inductively as follows: set $Q_i^{(0)} := Q_i$, set
  \[ 
  Q_i^{(a)} := (Q_i^{(a-1)} \setminus \barQ_{ij_ak_a}) 
  \cup (N_{ij_ak_a} \cap Q_i^{(a-1)}) 
  \subset Q_i^{(a-1)} 
  \] 
  for $a=1,\dots,f$, and set $Q_i^\new := Q_i^{(f)}$.  Then $Q_i^\new$
  is an open subset of the original set $Q_i$, and it contains $U_i
  \times \{0\}$.  Thus properties (a--c) in Lemma~\ref{lem:Q_i} hold
  for the new sets $\{Q_i^\new : i \in I\}$.  Furthermore, for each
  $i$,~$j$,~$k \in I$, $Q^\new_{ij} := Q_i^\new \cap O_{ij} \cap
  \varphi_{ij}^{-1}(Q_j^\new)$ is contained in $Q_{ij}$ and
  $Q_{ijk}^\new := Q_{ij}^\new \cap Q_{ik}^\new$ is contained in
  $Q_{ijk}$.  Also $Q_{ijk}^\new$ is contained in $N_{ijk}$.  
  Therefore properties (d) and (e) hold for $\{Q_i^\new: i \in I\}$,
  and the Lemma is proved.
\end{proof}

\begin{remark*}
  Let $Q_i \subset O_i$, $i \in I$, be open subsets such that
  properties (a--e) in Lemma \ref{lem:Q_i_extra} hold.  
In particular,
  then, $Q_{ij} \cap Q_{ik} \subset \varphi_{ij}^{-1}(O_{jk})$.  But
  slightly more is true: in fact $Q_{ij} \cap Q_{ik} \subset
  \varphi_{ij}^{-1}(Q_{jk})$.  For if $x \in Q_{ij} \cap Q_{ik}$ then
  $\varphi_{ij}(x) \in Q_{ji} \subset Q_j$ and $\varphi_{ik}(x) \in
  Q_{ki} \subset Q_k$.  Also $\varphi_{jk} \circ \varphi_{ij}(x) =
  \varphi_{ik}(x)$, which lies in $Q_k$.  Thus $\varphi_{ij}(x)$ lies
  in $Q_j \cap O_{jk} \cap \varphi_{jk}^{-1}( Q_k ) =: Q_{jk}$.
\end{remark*}

We now complete the proof of Theorem~\ref{thm:main_result}.  Choose
open subsets $Q_i \subset O_i$, $i \in I$, such that properties (a--e)
in Lemma \ref{lem:Q_i_extra} hold.  (This is possible by
Lemma~\ref{lem:Q_i_extra}.) 
Set $M'$ equal to the quotient space:
\[
\left(\coprod_{i \in I} Q_i \right) \!\!\Bigg/ \!\! \sim
\]
by the equivalence relation $\sim$ generated by $x \sim
\varphi_{ij}(x)$ where $x \in Q_{ij}$.  We claim that $M'$ is a
complex manifold.

Let $X = \coprod_{i \in I} Q_i$ and consider the binary relation $R$
on $X \times X$ given by:
\[
R := \coprod_{i,j \in I} Q_{ij} \subset \coprod_{i,j \in I} Q_i \times Q_j
= X \times X
\]
where the map $Q_{ij} \to Q_i \times Q_j$ is given by $(\text{\rm
  inclusion}, \varphi_{ij})$.  Then $M'$ is the quotient space of $X$
by the equivalence relation generated by $R$.  To show that $M'$ is a
complex manifold it suffices to prove that $M'$ is Hausdorff; hence it
suffices to prove that $R$ is closed and that $R$ is an equivalence
relation.  We have shown that the image of the map $Q_{ij} \to Q_i
\times Q_j$ is closed (Lemma~\ref{lem:closed_image}), so $R$ is
closed.  It remains to show that $R$ is an equivalence relation.
Reflexivity ($x \sim x$) is obvious, since $Q_{ii}= Q_i$ and
$\varphi_{ii}$ is the identity map.  Symmetricity ($x \sim y \implies
y \sim x$) is also obvious, since $\varphi_{ij}$ and $\varphi_{ji}$
are inverse to each other.  For transitivity ($x \sim y \wedge y \sim
z \implies x \sim z$) assume that $x \in Q_j$, $y\in Q_i$, $z \in
Q_k$, $x \sim y$, and $y \sim z$.  Then $y \in Q_{ij}$ (since $y \sim
x$) and $y \in Q_{ik}$ (since $y \sim z$), thus $y \in Q_{ij} \cap
Q_{ik}$.  The Remark after Lemma~\ref{lem:Q_i_extra} implies that $y
\in \varphi_{ij}^{-1}(Q_{jk})$, and Lemma~\ref{lem:Q_i_extra} implies
that $\varphi_{jk} \circ \varphi_{ij}(y) = \varphi_{ik}(y)$.  But $x =
\varphi_{ij}(y)$ and $z = \varphi_{ik}(y)$, so $z = \varphi_{jk}(x)$.
Thus $x \sim z$.  It follows that $R$ is an equivalence relation, and
that $M'$ is a complex manifold.  It is clear that $M$ is a closed
submanifold of $M'$.  This completes the proof of part (i) of
Theorem~\ref{thm:main_result}.

Let us prove part (ii) of Theorem~\ref{thm:main_result}.  Suppose we
have two closed embeddings $\iota_1 \colon M \to M_1'$, $\iota_2
\colon M \to M_2'$ and an isomorphism $\phi \colon \iota_1^{-1}
\cO_{M_1'} \cong \iota_2^{-1} \cO_{M_2'}$ of sheaves of $\C$-algebras
commuting with natural surjections to $\cO_M$.  By Lemma
\ref{lem:basicgluing}, the isomorphism $\phi \colon \iota_1^{-1}
\cO_{M_1'} \to \iota_2^{-1} \cO_{M_2'}$ is locally induced by a
biholomorphic map which is the identity on $M$.  Therefore we have a
locally finite open covering $\{S_i : i\in I\}$ of $M$, open
neighbourhoods $T_i$ of $S_i$ in $M'_2$, and holomorphic maps $\varphi
\colon T_i \to M_1'$ such that $\varphi_i$ is the identity map on $T_i
\cap M$ and $\phi|_{S_i}= \varphi_i^\star$.  Without loss of
generality we may assume that $S_i$ is relatively compact in $M$.
Choose an open covering $\{R_i: i\in I\}$ of $M$ such that $R_i
\Subset S_i$, and choose an open tubular neighbourhood of $M$ in
$M_2'$.  The tubular neighbourhood here is identified with a
neighbourhood of the zero section of the normal bundle of $M$ in
$M'_2$; we choose a (fibrewise) Riemannian metric on it.  We write
$U(\epsilon)\subset M_2'$ for the open tube of length $\epsilon>0$
over an open subset $U \subset M$.  The maps $\varphi_i$ and
$\varphi_j$ on the overlap $T_{ij} := T_i \cap T_j$ coincide on an
open neighbourhood $T_{ij}^{\circ}\subset T_{ij}$ of $S_{ij} := S_i
\cap S_j$.  Since, for fixed $i \in I$, there are only finitely many
$j\in I$ such that $S_{ij}$ is nonempty, there exists $\epsilon_i >0$
such that:
\begin{itemize} 
\item $R_i(\epsilon_i) \subset T_i$ 
\item $R_{ij}(\epsilon_i) \subset T_{ij}^{\circ}$ for all $j \in I$,
  where $R_{ij} := R_i \cap R_j \Subset S_{ij}$.
\end{itemize} 
Then the maps $\{\varphi_i|_{R_i(\epsilon_i)}: i\in I\}$ coincide over
each overlap $R_i(\epsilon_i) \cap R_j( \epsilon_j) \subset
T_{ij}^{\circ}$, $i$,~$j \in I$, and thus determine a global
holomorphic map $\varphi$ on $N = \bigcup_{i\in I} R_i(\epsilon_i)
\subset M_2'$.  The uniqueness of $\varphi$ as a germ is obvious.
This completes the proof of Theorem~\ref{thm:main_result}.

\section{The Proof of Theorem \ref{thm:with_modules}}

\begin{lemma} 
  \label{lem:coverings} 
  Let $\iota \colon M \to M'$ be a closed embedding of complex
  manifolds, let $\cA$ be the sheaf of $\C$-algebras $\cA = \iota^{-1}
  \cO_{M'}$ over $M$, and let $\cB$ be a locally finitely presented
  $\cA = \iota^{-1} \cO_{M'}$-module.  There exist:
  \begin{itemize} 
  \item an open covering $\{V_i : i \in I\}$ of $M$ such that $V_i$ is
    relatively compact in $M$;
  \item for each $i \in I$, an open subset $W'_i$ of $M'$ such that
    $V_i\subset W_i'$;
  \item for each $i$,~$j \in I$, an open subset $A_{ij}$ of $M'$ such
    that $V_{ij} \subset A_{ij} \subset W'_{ij}$, where $V_{ij} := V_i
    \cap V_j$ and $W'_{ij} := W'_i \cap W'_j$;
  \item for each $i$,~$j$,~$k\in I$, an open subset $B_{ijk}$ of $M'$
    such that $V_{ijk} \subset B_{ijk} \subset A_{ijk}$, where
    $V_{ijk} := V_i \cap V_j \cap V_k$ and $A_{ijk} := A_{ij} \cap
    A_{jk} \cap A_{ik}$;
  \item for each $i \in I$, a coherent $\cO_{W_i'}$-module $\cB'_i$ on
    $W'_i$;
  \item for each $i \in I$, an isomorphism $\theta_i \colon \iota^{-1}
    \cB'_i \cong \cB|_{V_i}$ of $\cA_{V_i}$-modules;
  \item for each $i$,~$j \in I$, an isomorphism 
    $\phi_{ij} \colon \cB'_i|_{A_{ij}} \cong \cB'_j|_{A_{ij}}$ 
    of $\cO_{A_{ij}}$-modules;
  \end{itemize} 
  such that $A_{ij}$, $B_{ijk}$ are symmetric in their indices and
  that the diagrams:
  \begin{align} 
    \label{eq:cocycle} 
    \begin{split} 
      \xymatrix{ 
        \iota^{-1} \cB'_i|_{A_{ij}} \ar[rr]^{\iota^{-1}\phi_{ij}} 
        \ar[rd]_{\theta_i} && \iota^{-1} \cB'_j|_{A_{ij}} 
        \ar[dl]^{\theta_j} \\ 
        & \cB|_{V_{ij}} 
      } 
      \quad 
      \xymatrix{
        \cB'_i|_{B_{ijk}} \ar[rr]^{\phi_{ik}} \ar[rd]_{\phi_{ij}} & & 
        \cB'_k|_{B_{ijk}} \\ 
        & \cB'_j|_{B_{ijk}} \ar[ur]_{\phi_{jk}} 
      }
    \end{split} 
  \end{align} 
  commute for each $i,j,k\in I$. 
\end{lemma}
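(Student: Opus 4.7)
The plan is to first extend local $\cA$-module presentations of $\cB$ to coherent $\cO_{M'}$-modules on neighborhoods of $M$, then extract the transition isomorphisms $\phi_{ij}$ using the standard fact that morphisms of coherent sheaves extend from $M$ to an open neighborhood in $M'$, and finally obtain the cocycle relation on an open $B_{ijk}$ by appealing to the principle that a morphism of coherent sheaves whose stalks vanish along a closed subset vanishes on an open neighborhood of that subset.

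For step one, choose a locally finite open covering $\{V_i : i \in I\}$ of $M$ by relatively compact sets such that on each $V_i$ there is an $\cA$-presentation $\cA_{V_i}^{\oplus k_i} \xrightarrow{\psi_i} \cA_{V_i}^{\oplus l_i} \to \cB|_{V_i} \to 0$. The matrix entries of $\psi_i$ are sections of $\iota^{-1}\cO_{M'}$ over $V_i$, which by the definition of $\iota^{-1}$ are represented by holomorphic functions on some common open neighborhood $W'_i$ of $V_i$ in $M'$. Setting $\cB'_i := \operatorname{coker}(\cO_{W'_i}^{\oplus k_i} \to \cO_{W'_i}^{\oplus l_i})$ produces a coherent $\cO_{W'_i}$-module, and exactness of $\iota^{-1}$ yields the isomorphism $\theta_i \colon \iota^{-1} \cB'_i \cong \cB|_{V_i}$.

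For step two, finite presentability of $\cB'_i$ gives a natural isomorphism $\iota^{-1} \mathcal{H}om_{\cO_{W'_{ij}}}(\cB'_i, \cB'_j) \cong \mathcal{H}om_{\cA}(\iota^{-1}\cB'_i, \iota^{-1}\cB'_j)$ on $V_{ij}$, so $\theta_j^{-1} \circ \theta_i$ is represented by a morphism $\widetilde{\phi}_{ij}$ of coherent $\cO$-modules on some open neighborhood of $V_{ij}$ in $W'_{ij}$. The locus where $\widetilde{\phi}_{ij}$ is a stalkwise isomorphism is the complement of the (closed) supports of its coherent kernel and cokernel; it is open and contains $V_{ij}$. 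Take this locus as $A_{ij}$ and $\phi_{ij}$ as the restriction of $\widetilde{\phi}_{ij}$. Replacing $A_{ij}$ by $A_{ij} \cap A_{ji}$ and then shrinking to the open neighborhood of $V_{ij}$ on which $\phi_{ji} \circ \phi_{ij} = \id$ (which exists because this equality holds stalkwise on $V_{ij}$), we achieve the strict symmetries $A_{ij} = A_{ji}$, $\phi_{ji} = \phi_{ij}^{-1}$; set $A_{ii} := W'_i$ and $\phi_{ii} := \id$. The first diagram in~\eqref{eq:cocycle} commutes by construction of $\widetilde{\phi}_{ij}$.

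For step three, on $A_{ijk}$ the morphisms $\phi_{ik}$ and $\phi_{jk} \circ \phi_{ij}$ have equal stalks at every $x \in V_{ijk}$, since after conjugating by the appropriate $\theta$'s each becomes the identity on $\cB_x$. Their difference is therefore a morphism of coherent $\cO_{A_{ijk}}$-modules whose image has closed support disjoint from $V_{ijk}$, and the complement of that support in $A_{ijk}$ is an open neighborhood of $V_{ijk}$ on which $\phi_{jk} \circ \phi_{ij} = \phi_{ik}$. Intersecting this neighborhood with its images under permutations of $(i,j,k)$ yields a $B_{ijk}$ symmetric in its indices. The main obstacle is the symmetrization bookkeeping required to force the equalities $A_{ij} = A_{ji}$, $\phi_{ji} = \phi_{ij}^{-1}$ and the symmetry of $B_{ijk}$ to hold strictly rather than merely as germs at $M$; these are uniformly handled by the coherent-sheaf openness principle that underlies the whole argument.
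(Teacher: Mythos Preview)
Your proof is correct and follows the same overall architecture as the paper (local extension of presentations, construction of transition maps, stalkwise verification of the cocycle condition), but the construction of $\phi_{ij}$ is genuinely different. The paper chooses the $V_i$ and the neighbourhoods $A_{ij}$ to be Stein and builds $\phi_{ij}$ by hand: it extends the generating sections of $\cB'_i$ to sections $t_a$ of $\cB'_j$, then invokes Cartan's Theorem~B to lift each $t_a$ back to $u_a\in\Gamma(A_{ij},\cO^{\oplus l_j})$, producing a morphism of presentations which is then shown to descend. You bypass this entirely by observing that, since $\cB'_i$ is finitely presented, the natural map
\[
\iota^{-1}\mathcal{H}om_{\cO_{W'_{ij}}}(\cB'_i,\cB'_j)\longrightarrow \mathcal{H}om_{\cA}(\iota^{-1}\cB'_i,\iota^{-1}\cB'_j)
\]
is an isomorphism of sheaves on $V_{ij}$, so $\theta_j^{-1}\circ\theta_i$ is already a section of the left-hand side and hence is represented by an honest $\cO$-module morphism $\widetilde{\phi}_{ij}$ on a neighbourhood of $V_{ij}$. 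This is cleaner and eliminates the Stein hypothesis; what it costs is that you must appeal (as indeed the paper also does, when it asserts without comment that the $\xi_i$ and the $s_a$ ``extend'' or ``lift'' to a neighbourhood of $W_i$ or $W_{ij}$) to the general principle that a global section of $\iota^{-1}\cG$ over an open $U\subset M$, for $\cG$ coherent on $M'$, is represented by a section of $\cG$ on some open neighbourhood of $U$ in $M'$. That principle is exactly the tubular-neighbourhood shrinking argument used elsewhere in the paper, so you are not skipping anything the paper does not also skip. You are also more explicit than the paper in checking that $\phi_{ij}$ is an isomorphism on $A_{ij}$, by passing to the open complement of the supports of the (coherent) kernel and cokernel of $\widetilde{\phi}_{ij}$; the paper's proof leaves this step implicit.
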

 
\begin{proof}[Proof of Lemma \ref{lem:coverings}] 
  We take open coverings $\{V_i : i\in I\}$, $\{W_i:i\in I\}$ of $M$
  by Stein open subsets $V_i$, $W_i$ such that $V_i \Subset W_i
  \Subset M$ and that $\cB|_{W_i}$ has a finite presentation
  \[
  \xymatrix{
    \cA_{W_i}^{\oplus k_i} \ar[r]^{\xi_i} &
    \cA_{W_i}^{\oplus l_i} \ar[r]^{\eta_i} &
    \cB|_{W_i} \ar[r] & 0
    }
    \]
    as in \eqref{eq:presentation}.  For example, we can take
    $V_i$,~$W_i$ to be small open balls centered at the same point of a
    co-ordinate chart.  The $\cA$-module homomorphism $\xi_i \colon
    \cA_{W_i}^{\oplus k_i} \to \cA_{W_i}^{\oplus l_i}$ extends to an
    $\cO$-module homomorphism $\xi_i' \colon \cO_{W_i'}^{\oplus k_i}
    \to \cO_{W_i'}^{\oplus l_i}$ on a neighbourhood $W_i'$ of $W_i$ in
    $M'$ and defines a coherent $\cO_{W_i'}$-module $\cB_i'$ by the
    exact sequence:
    \[
    \xymatrix{
      \cO_{W'_i}^{\oplus k_i} \ar[r]^{\xi_i'} &
      \cO_{W'_i}^{\oplus l_i} \ar[r]^{\eta'_i} &
      \cB_i' \ar[r] & 0
    }
    \]
    By construction there is an $\cA$-module isomorphism $\theta_i
    \colon \iota^{-1}\cB_i' \to \cB|_{W_i}$.  For each pair $(i,j)$
    such that $V_{ij} := V_i \cap V_j$ is nonempty, we construct a
    homomorphism $\phi_{ij}$ from $\cB_i'$ to $\cB_j'$.  Let
    $e_1,\dots,e_{l_i}$ denote the standard basis of
    $\cO_{W_i}^{\oplus l_i}$.  For each $1\le a \le l_i$, the image
    $\eta_i(e_a)$ is a section of $\cB'_i$ and induces a section $s_a$
    of $\iota^{-1} \cB'_i \cong \cB|_{W_i}$.  Via the isomorphism
    $\iota^{-1} \cB_j' |_{W_{ij}} \cong \cB|_{W_{ij}}$, the
    restriction $s_a|_{W_{ij}}$ can be lifted to a section $t_a$ of
    $\cB'_j$ over an open neighbourhood $C_{ij}$ of $W_{ij}$ in
    $W_{ij}'$.  Because the intersection $V_{ij}$ of Stein open sets
    $V_i$, $V_j$ is Stein and because $V_{ij} \Subset W_{ij}$, we can
    find a Stein open neighbourhood $A_{ij}$ of $V_{ij}$ in $C_{ij}$.
    Because $A_{ij}$ is Stein, we can find a lift $u_a \in
    \Gamma(A_{ij}, \cO^{\oplus l_j})$ of $t_a|_{A_{ij}}$ such that
    $\eta_j (u_a) = t_a|_{A_{ij}}$.  The sections $u_1,\dots, u_{l_i}$
    define a homomorphism $\psi_{ij} \colon \cO_{A_{ij}}^{\oplus l_i}
    \to \cO_{A_{ij}}^{\oplus l_j}$ sending $e_a$ to $u_a$.
    \[
    \xymatrix{
      \cO_{A_{ij}}^{\oplus k_i} \ar[r]^{\xi_i} &
      \cO_{A_{ij}}^{\oplus l_i} \ar[r]^{\eta_i} \ar[d]_{\psi_{ij}} &
      \cB_i'|_{A_{ij}} \ar[r] & 0 \\ 
      \cO_{A_{ij}}^{\oplus k_j} \ar[r]^{\xi_j} &
      \cO_{A_{ij}}^{\oplus l_j} \ar[r]^{\eta_j} &
      \cB_j'|_{A_{ij}} \ar[r] & 0  
    }
    \]
    We claim that, after shrinking $A_{ij}$ if necessary, $\psi_{ij}$
    induces a homomorphism $\phi_{ij} \colon \cB_i'|_{A_{ij}} \to
    \cB_j'|_{A_{ij}}$.  It suffices to show that the composition
    $\eta_j \circ \psi_{ij} \circ \xi_i$ is zero in a neighbourhood of
    $V_{ij}$.  
By construction, $\eta_i(v)$ and $\eta_j \circ \psi_{ij}(v)$ 
define the same section of $\cB$ over $V_{ij}$ 
for every $v\in \Gamma(A_{ij},\cO_{A_{ij}}^{\oplus l_i})$. 
    Hence for $w\in \Gamma(A_{ij}, \cO_{A_{ij}}^{\oplus k_i})$, 
    $\eta_j \circ \psi_{ij} \circ \xi_i(w)$ and $\eta_i \circ
    \xi_i(w) =0$ define the same section of $\cB$ over $V_{ij}$.  This
    means that $\eta_j \circ \psi_{ij} \circ \xi_i(w)$ vanishes in a
    neighbourhood of $V_{ij}$, and the claim follows.  It is clear
    that the first diagram in \eqref{eq:cocycle} commutes.  We can
    also assume that $A_{ij} = A_{ji}$ by replacing $A_{ij}$ with
    $A_{ij} \cap A_{ji}$ and restricting $\phi_{ij}$ to it if
    necessary.

    Finally we find an open subset $B_{ijk}$ of $A_{ijk} = A_{ij} \cap
    A_{jk} \cap A_{ij}$ containing $V_{ijk} = V_i \cap V_j \cap V_k$
    on which the second diagram in \eqref{eq:cocycle} commutes
    (i.e.~the cocycle condition holds). But this is straightforward,
    because $\phi_{jk} \circ \phi_{ij} = \phi_{ik}$ holds at the stalk
    of each $x\in V_{ijk}$.
\end{proof} 

Let $\iota\colon M\hookrightarrow M'$, $\cA$, and $\cB$ be as in the
statement of Theorem~\ref{thm:with_modules}.  Take the data
constructed in Lemma \ref{lem:coverings}.  By taking a refinement if
necessary, we may assume that the open covering $\{V_i : i\in I\}$ is
locally finite.  Choose an open covering $\{U_i : i\in I\}$ of $M$
such that $U_i \Subset V_i$.  Take an open tubular neighbourhood of
$M$ in $M'$ and, as in the proof of Theorem~\ref{thm:main_result}, fix
a fibrewise Riemannian metric on it.  For a open subset $U$ of $M$ and
$\epsilon>0$, we denote by $U(\epsilon)\subset M'$ the open tube of
length $\epsilon$ over $U$. For each $i\in I$, there are only finitely
many $j\in I$ such that the intersection $V_{ij}$ is non-empty.
Therefore we can find $\epsilon_i >0$ such that:
\begin{itemize} 
\item $U_i(\epsilon_i) \subset W_i'$;  
\item $U_{ij}(\epsilon_i) \subset A_{ij}$ for all $j \in I$, where
  $U_{ij}:= U_i \cap U_j$;
\item $U_{ijk}(\epsilon_i) \subset B_{ijk}$ for all $j$,~$k \in I$,
  where $U_{ijk} := U_i \cap U_j \cap U_k$.
\end{itemize} 
Then the coherent sheaves $\cB_i'|_{U_i(\epsilon_i)}$ glue together
via the homomorphisms $\phi_{ij}|_{U_{i}(\epsilon_i) \cap
  U_j(\epsilon_j)}$ to give a global coherent $\cO_N$-module $\cB'$ on
$N = \bigcup_{i\in I} U_i(\epsilon_i)$.  It is clear that there is an
isomorphism $\iota^{-1}\cB' \cong \cB$ of $\cA$-modules.  This
completes the proof of part (i) of Theorem \ref{thm:with_modules}.

Let us prove part (ii) of Theorem \ref{thm:with_modules}.  Suppose
that we have coherent $\cO_N$-modules $\cB_1'$ and $\cB_2'$ and a
isomorphisms $\iota^{-1}\cB_1' \cong \cB \cong \iota^{-1} \cB_2'$ of
$\cA$-modules.  Let $\phi \colon \iota^{-1}\cB_1' \to \iota^{-1}
\cB_2'$ denote the composite isomorphism. We can find a locally finite
open covering $\{S_i :i\in I\}$ of $M$ together with a family $\{T_i :
i\in I\}$ of open subsets of $M'$ such that $S_i \Subset T_i$ and
$\cB_1'|_{T_i}$ and $\cB_2'|_{T_i}$ have finite presentations as
$\cO_{T_i}$-modules.  Without loss of generality we may assume that
$S_i$ is Stein.  The argument in the proof of Lemma
\ref{lem:coverings} shows that we can find an open neighbourhood
$T_i^\circ$ of $S_i$ in $T_i$ and a homomorphism $\Phi_i \colon
\cB_1'|_{T_i^\circ} \to \cB_2'|_{T_i^{\circ}}$ such that $\iota^{-1}
\Phi_i = \phi|_{S_i}$.  For each $i$,~$j\in I$, there exists an open
subset $P_{ij}$ of $T_i^\circ \cap T_j^\circ$ containing $S_{ij} :=
S_i \cap S_j$ such that $\Phi_i |_{P_{ij}} = \Phi_j|_{P_{ij}}$.  Take
an open covering $\{R_i: i\in I\}$ of $M$ such that $R_i \Subset S_i$.
As before, we choose a tubular neighbourhood of $M$ in $M'$ and a
(fibrewise) Riemannian metric on it, denoting by $U(\epsilon)$ the
open $\epsilon$-tube over the open subset $U\subset M$.  Then for each
$i\in I$, there exists $\epsilon_i>0$ such that:
\begin{itemize} 
\item $R_i(\epsilon_i) \subset T_i^\circ$; 
\item $R_{ij}(\epsilon_i) \subset P_{ij}$; 
\end{itemize} 
because there are only finitely many $j\in I$ such that $S_{ij}$ is
nonempty.  Now the homomorphisms $\Phi_i|_{R_i(\epsilon_i)}$ glue
together to define a global homomorphism $\Phi \colon \cB_1'|_P \to
\cB_2'|_P$ over $P = \bigcup_{i\in I} R_i(\epsilon_i)$ such that $\phi
= \iota^{-1} \Phi$.  The uniqueness of $\Phi$ as a germ is obvious.
This completes the proof of Theorem~\ref{thm:with_modules}.

\section{Global Unfolding of TEP Structures}
\label{sec:unfolding}

As an application of our results we now prove a global unfolding theorem for TEP~structures, by globalizing the reconstruction theorem for germs of TEP structures due to Hertling--Manin~\cite{Hertling--Manin}.  This global unfolding theorem has applications in mirror symmetry and the study of quantum cohomology~\cite[\S8.1.6]{CI:Fock_sheaf}.  TEP structures were introduced by Hertling~\cite{Hertling}; they are closely related to Dubrovin's notion of Frobenius manifold~\cite{Dubrovin:ICM,Dubrovin:Frobenius,Hertling--Manin}.

\begin{definition}[TEP structure]
  Let $M$ be a complex manifold.
  A \emph{TEP structure} $(\cF, \nabla, (\cdot,\cdot)_{\cF})$ 
  with base $M$ consists of 
  a locally free $\cO_{M\times \C}$-module $\cF$ of rank $N+1$, and 
  a meromorphic flat connection 
  \[
  \nabla \colon \cF \to 
  (\pi^*\Omega_{M}^1 \oplus \cO_{M\times \C} z^{-1}dz) 
  \otimes_{\cO_{M\times \C}} \cF(M\times \{0\}) 
  \]
  so that for $f\in \cO_{M\times \C}$, $s\in \cF$,
  and tangent vector fields $v_1,v_2\in \Theta_{M \times \C}$:
  \begin{align*}
    \nabla (f s) = df \otimes s + f \nabla s &&
    [\nabla_{v_1}, \nabla_{v_2}] = \nabla_{[v_1,v_2]}  
  \end{align*}
  together with a non-degenerate pairing 
  \[
  (\cdot,\cdot)_{\cF} \colon 
  (-)^* \cF \otimes_{\cO_{M\times \C}} \cF \to \cO_{M\times \C}
  \]
  which satisfies
  \begin{align*} 
    \begin{split} 
      ((-)^*s_1,s_2)_{\cF} 
      & = (-)^* ((-)^* s_2, s_1)_{\cF}   \\ 
      d ((-)^* s_1, s_2)_{\cF}  
      & = ((-)^* \nabla s_1, s_2)_{\cF} 
      + ((-)^*s_1,\nabla s_2)_{\cF}  
    \end{split} 
  \end{align*}
  for $s_1,s_2 \in \cF$. 
  Here $\cF(M\times \{0\})$ denotes the sheaf of 
  sections of $\cF$ with poles of order at most~1 
  along the divisor $M\times \{0\} \subset M \times \C$. 
\end{definition} 

\begin{definition}[Miniversality]
  Let $M$ be a complex manifold.  A \emph{TEP structure} $(\cF, \nabla, (\cdot,\cdot)_{\cF})$ 
  with base $M$ is called \emph{miniversal} if for each $y \in M$, the set
  \[
  \big\{ x \in \cF|_{(y,0)} : \text{the map $T_y M \to \cF|_{(y,0)}$, $v \mapsto (z \nabla_v x) |_{(y,0)}$ is an isomorphism} \big\}
  \]
  is non-empty in the fiber $\cF|_{(y,0)}$.
\end{definition}

\begin{theorem}[Global unfolding for TEP structures]
  Let $M$ be a complex manifold and $(\cF, \nabla, (\cdot,\cdot)_{\cF})$ a TEP structure with base $M$.  
Suppose that for each $y\in M$, there exists a section $\zeta$ of $\cF$ 
over a neighbourhood of $(y,0)\in M\times \C$ such that:
  \begin{description}
  \item[(IC)] the map $T_y M \to \cF|_{(y,0)}$ defined by $v \mapsto z\nabla_v \zeta|_{(y,0)}$ is injective;
    \item[(GC)] the fiber $\cF|_{(y,0)}$ is generated by iterated derivatives 
      \begin{align*} 
      (z^2\nabla_{\partial_z})^l z\nabla_{v_1} \cdots z 
     \nabla_{v_k}\zeta\big|_{(y,0)} && l\ge 0 
      \end{align*} 
      with respect to local vector fields $v_1,\ldots,v_k$ on $M$ near $y$ 
      and $z^2 \partial_z$. 
  \end{description}
  Then there exist a complex manifold $M'$, a miniversal TEP structure $(\cF', \nabla', (\cdot,\cdot)_{\cF'})$ with base $M'$, and a closed embedding $\iota \colon M \to M'$ such that:
  \[
  \iota^\star \Big(\cF', \nabla', (\cdot,\cdot)_{\cF'}\Big) = \Big(\cF, \nabla, (\cdot,\cdot)_{\cF}\Big)
  \]
  Furthermore the manifold-germ $M'$ and the TEP structure $(\cF', \nabla', (\cdot,\cdot)_{\cF'})$ are unique up to unique isomorphism in the sense of Theorem~\ref{thm:main_result} and~\ref{thm:with_modules}.
\end{theorem}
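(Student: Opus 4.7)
The plan is to reduce the theorem to Hertling--Manin's germ-level unfolding theorem combined with the gluing results of Theorems \ref{thm:main_result} and \ref{thm:with_modules}. Set $N + 1 = \mathrm{rank}\,\cF$ and $n = N + 1 - m$, so that the base of the global unfolding will have dimension $m + n = N + 1$.

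For each $y \in M$, conditions (IC) and (GC) at $y$ are the injectivity and generation hypotheses of Hertling--Manin's reconstruction theorem. Thus HM provides a neighbourhood $U_y$ of $y$ in $M$, an open neighbourhood $M'_y$ of $U_y \times \{0\}$ in $U_y \times \C^n$, the closed embedding $\iota_y \colon U_y \hookrightarrow M'_y$, $u \mapsto (u, 0)$, and a miniversal TEP structure $(\cF'_y, \nabla'_y, (\cdot,\cdot)'_y)$ on $M'_y \times \C$ pulling back along $\iota_y \times \id_\C$ to the restriction $(\cF, \nabla, (\cdot,\cdot)_\cF)|_{U_y \times \C}$; HM further asserts that this germ is unique up to unique isomorphism. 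Using HM uniqueness, the local sheaves $\iota_y^{-1}\cO_{M'_y}$ glue canonically to a sheaf $\cA$ of $\C$-algebras on $M$ with a surjection $\pi \colon \cA \to \cO_M$ satisfying the local hypothesis \eqref{eq:local-condition} of Theorem \ref{thm:main_result}. Applying Theorem \ref{thm:main_result} produces a complex manifold $M'$ of dimension $N + 1$ and a closed embedding $\iota \colon M \hookrightarrow M'$ with $\iota^{-1}\cO_{M'} \cong \cA$.

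The local sheaves $\cF'_y$, together with their unique isomorphisms on overlaps furnished by HM, determine a locally finitely presented $(\iota \times \id_\C)^{-1}\cO_{M' \times \C}$-module structure on $\cF$. Applying Theorem \ref{thm:with_modules} to the closed embedding $\iota \times \id_\C \colon M \times \C \hookrightarrow M' \times \C$ yields a coherent $\cO_{M' \times \C}$-module $\cF'$ on an open neighbourhood of $M \times \C$ in $M' \times \C$; since each $\cF'_y$ is locally free, so is $\cF'$. The local connections $\nabla'_y$ and pairings $(\cdot, \cdot)'_y$ coincide on overlaps under these gluing isomorphisms, because they are the unique such data on the HM miniversal germ, and therefore assemble into a global meromorphic flat connection $\nabla'$ and a pairing $(\cdot,\cdot)_{\cF'}$ on $\cF'$. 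All TEP axioms (flatness, pole order bounds, involution-compatibility of the pairing) are local closed conditions which hold on $\iota(M)$ by construction and hence on a neighbourhood. Miniversality is an open condition (non-vanishing of a determinant), and it holds on $\iota(M)$ by HM, so after shrinking $M'$ the TEP structure $(\cF', \nabla', (\cdot,\cdot)_{\cF'})$ is miniversal throughout $M'$.

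Uniqueness of $(M', \iota, \cF', \nabla', (\cdot,\cdot)_{\cF'})$ up to unique isomorphism follows by combining Theorem \ref{thm:main_result}(ii), Theorem \ref{thm:with_modules}(ii), and HM uniqueness: any two such unfoldings restrict to HM unfoldings on each $U_y$, the induced sheaf-of-algebras isomorphism propagates to a unique biholomorphism of germs by Theorem \ref{thm:main_result}(ii), and the induced module isomorphism propagates similarly by Theorem \ref{thm:with_modules}(ii). The principal subtlety is packaging the connection $\nabla'$---which is neither $\cO$-linear nor holomorphic along $M' \times \{0\}$---into the sheaf-theoretic gluing framework; this is not an obstacle in practice, because HM supplies each $\nabla'_y$ with the correct pole behaviour and uniqueness makes overlap compatibility automatic, so no separate extension step for $\nabla'$ is needed beyond the gluing performed for $\cF'$.
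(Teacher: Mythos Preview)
Your proposal is correct and follows the same strategy as the paper: the paper's proof is a single sentence---``Combine Theorems~\ref{thm:main_result} and~\ref{thm:with_modules} with the universal unfolding theorem for germs of TEP structures proved by Hertling--Manin''---and your argument is simply a careful unpacking of that sentence, using HM locally and HM-uniqueness to obtain the sheaf $\cA$ and the module data needed to invoke the two gluing theorems.
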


\begin{proof}
  Combine Theorems~\ref{thm:main_result} and~\ref{thm:with_modules} with the universal unfolding theorem for germs of TEP structures proved by Hertling--Manin~\cite[Theorem~2.5,~Lemma~3.2]{Hertling--Manin}.
\end{proof}

Analogous global unfolding theorems for TE structures~\cite{Hertling--Manin}, log-trTLEP structures~\cite{Reichelt}, and so on can be proved in exactly the same way.  Global unfoldings of log-trTLEP structures have interesting applications in Gromov--Witten theory~\cite{CI:modularity}.

\bibliographystyle{plain}
\bibliography{bibliography}

\end{document}